\documentclass[centering,11pt,reqno]{amsart}

\usepackage[utf8]{inputenc}
\usepackage[T1]{fontenc}
\usepackage{amsmath,amsthm}
\usepackage{amsfonts,amssymb}
\usepackage[mathscr]{eucal}
\usepackage{url}
\usepackage{mdframed}
\usepackage{paralist}
\usepackage[colorlinks,cite color=blue,pagebackref=true,pdftex]{hyperref}
\usepackage[margin=2.5cm]{geometry}
\usepackage{tikz}
\usetikzlibrary{calc,shapes}
\usepackage{mathtools}
\usepackage{multicol}
\usepackage{ytableau}
\usepackage{blkarray}
\usepackage[capitalize]{cleveref}
 \usepackage[foot]{amsaddr}
\usepackage{tikz-cd}
\usepackage{ytableau}
\usepackage{bbm}
\usepackage{comment}
\usepackage{color}
\usepackage{todonotes}
\usepackage{accsupp}

\newtheorem{theorem}{Theorem}[section]
\newtheorem*{theorem*}{Theorem}
\newtheorem*{cor*}{Corollary}
\newtheorem*{conj*}{Conjecture}
\newtheorem*{lemma*}{lem}
\newtheorem*{prop*}{Proposition}
\newtheorem{lem}[theorem]{Lemma}

\theoremstyle{definition}

\renewcommand{\epsilon}{\varepsilon}

\title{Monotone Diameters of Lattice Polytopes}

\author[Black]{Alexander E. Black}
\address{Department of Mathematics, Bowdoin College}
\email{a.black@bowdoin.edu}

\begin{document}

\maketitle

\begin{abstract}  
An influential 1989 result of Naddef shows that the diameters of $0/1$-polytopes are at most their dimension. This was extended shortly after by Kleinschmidt and Onn to any lattice polytope in $[0,k]^{d}$, where they showed a bound of at most $dk$. Naddef's argument easily extends to the monotone setting motivated by the simplex method, where one requires paths to increase with respect to a linear objective function. However, the Kleinschmidt-Onn argument does not. In fact, no argument in the 30 years since has managed to fill that gap. Prior to this work, it remained open whether the monotone diameter is bounded by a polynomial in $d$ and $k$ with no lower bounds suggesting any separation between the worst-case diameter and worst-case monotone diameter. Linear upper bounds hold for $k=1$ and $k=2$. However, we exhibit a sharp threshold for this question at $k = 3$ by constructing for each $d \geq 1$ a lattice polytope in $[0,3]^{6d}$ with monotone diameter at least $2^{d}-1$. In particular, the polynomial bound does not hold. Furthermore, we show that Naddef's result does not extend to the unbounded setting by exhibiting a family of unbounded polyhedra with $0/1$-vertices and diameter exponential in their dimension.
\end{abstract}

\section{Introduction}

The \textbf{combinatorial diameter} of a polyhedron is the maximal number of edges in any shortest path between a pair of vertices in the graph of the polyhedron. The simplex method solves a linear program by tracing a path in the graph of its feasible region to reach an optimal solution such that at each step the objective function increases. The run-time of the simplex method corresponds to the number of edges in the path together with the complexity of finding the path. As a consequence of this connection, combinatorial diameters of polyhedra have been studied extensively for decades. See \cite{santosprogress, BytheBook} and references therein for a sample of the work done on the topic. 

The additional restriction of objective function increase is called \textbf{monotonicity}. The \textbf{monotone diameter} of a polytope is the maximum distance to a sink across all orientations of its graph induced by a linear objective function. Both monotone diameters and combinatorial diameters yield lower bounds for the run-time of a simplex method with any pivot rule for solving linear programs over that polytope.   

 Naddef proved that the combinatorial diameter of any $0/1$-polytope, a polytope with vertices in $\{0,1\}^{d}$, is at most its dimension by a simple argument \cite{naddef}. His argument applies easily to the monotone diameter yielding the same bound. An alternative proof appears in \cite{01MonotoneHirsch}. Furthermore, there are constructive polynomial bounds \cite{Chubanov01,SimplexDiameterProof} and a simplex method guaranteed to match the bound for its non-degenerate steps \cite{simplexzeroone}. 

In 1992, Kleinschmidt and Onn extended Naddef's argument equally simply to any \textbf{lattice polytope}, a polytope with vertices in $\mathbb{Z}^{d}$  \cite{origlattpolydiam}. They show the combinatorial diameter of a lattice polytope in  $[0,k]^{d}$ is at most $dk$. There have since been improvements on the upper bounds to $\lfloor(k-1/2)d\rfloor$ for $k \geq 2$ \cite{LattPolyDiam} and $dk - \lceil2d/3 \rceil$  for $k \geq 3$ \cite{implattdiam} with impressive work finding precise bounds in the case of lattice zonotopes that revealed connections to number theory \cite{PrimZono, LattZonoDiam, PrimitivePoint}. Furthermore, these bounds have been made constructive \cite{LatticeShadow, ShortSimpPaths, ChubanovLatt}. However, none of these works in the thirty years since the foundational work of Kleinschmidt and Onn managed to guarantee a polynomial length monotone path as the simplex method requires.  

Conjecture 1.8 of \cite{LatticeShadow} claims that monotone diameters should be bounded by a polynomial in $d$ and $k$, and several results support this conjecture. As noted, Naddef's argument covers $k = 1$ and applies to the monotone setting. The case of $k=2$ was resolved in \cite{LatticeShadow}, where a bound of $3d$ was shown. Furthermore, Kitahara and Mizuno showed in \cite{kitaharamizuno2011primal} that Dantzig's pivot rule for the simplex method takes polynomially many non-degenerate pivot steps in terms of $d$ and $k$ for any lattice polytope in equality form, i.e., of the form $\{\mathbf{x}: A \mathbf{x} = \mathbf{b}, \mathbf{x} \geq \mathbf{0}\}$ implying a polynomial monotone diameter bound in that case. The strict monotone diameter, the maximum distance from a source to a sink across all orientations induced by a linear objective function, is at most $2dk$ by Corollary 1.2.1 of  \cite{AlexThesis}. That bound may also be made algorithmic via an application of the shadow vertex pivot rule for the simplex method. Nonetheless, we show that conjecture is false even for fixed $k = 3$.

\begin{theorem} \label{thm:main}
For each $d \geq 1$, there exists a lattice polytope in $[0,3]^{6d}$ with monotone diameter at least $2^{d}-1$.
\end{theorem}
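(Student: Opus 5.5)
We argue by recursion on $d$, building polytopes $P_d \subseteq [0,3]^{6d}$ and linear objectives $c_d$ together with a distinguished start vertex $s_d$ and the unique optimal vertex $t_d$, so that every $c_d$-monotone path from $s_d$ to $t_d$ has at least $2^{d}-1$ edges. We would prove the stronger inductive statement directly, since the monotone diameter is at least the monotone distance from $s_d$ to the sink. The model is a Klee--Minty/Goldfarb-type deformed cube, where the length of the monotone path doubles at each level: $L(d) \geq 2L(d-1)+1$ with $L(1) = 1$, which gives $2^{d}-1$. The real difficulty is that classical Klee--Minty cubes need exponentially large coordinates. Here the coordinates must stay in $[0,3]$, so each new level gets a fresh block of $6$ coordinates instead of a fresh scale.

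For the step from $d-1$ to $d$, we would embed two copies of $P_{d-1}$ in $\mathbb{R}^{6(d-1)} \times \mathbb{R}^{6}$. One copy, the ``bottom'' copy, is attached to a small lattice gadget $G_0$ in the new six coordinates. The other, the ``top'' copy, is attached to a gadget $G_1$, and it is reflected so that its monotone orientation is reversed, as in the Klee--Minty reflection trick. $P_d$ is the convex hull of the union. The gadgets in $[0,3]^6$ play the role that the exponentially many scales play in Klee--Minty. We want a lattice polytope in $[0,3]^6$ with two designated faces $F_0$ and $F_1$, and a linear functional on the new coordinates, with these properties:
\begin{enumerate}
\item[(i)] the only edges leaving the bottom copy towards the top copy go from $t_{d-1}$ in the bottom copy (the old sink) to the reflected image of $t_{d-1}$ in the top copy;
\item[(ii)] the objective gain from this jump is positive but smaller than the gain that can be obtained within either copy, and no shortcut edges exist.
\end{enumerate}
We would set $c_d = (\pm c_{d-1}, \epsilon\, g)$, where $g$ is the gadget functional and $\epsilon$ is small. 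Since only directions matter, a small rational $\epsilon$ is allowed, and the objective, unlike the vertices, needs no integrality.

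The main steps are as follows.
\begin{enumerate}
\item Construct the gadget explicitly in $[0,3]^6$. Entries up to $3$ are needed, since the $k=2$ case is bounded by $3d$. We expect the gadget to use a non-unimodular edge direction with entries $3$, so that convex hull operations cannot create diagonal edges.
\item Determine the edges of $P_d$. We would show that every edge of $P_d$ is either an edge of one copy or the single connecting edge, by exhibiting separating functionals for the candidate non-edges.
\item Verify monotonicity. The orientation restricted to each copy agrees with the orientation of $P_{d-1}$ (reversed for the top copy). So any monotone path from $s_d = s_{d-1}\times G_0$ must traverse the bottom copy to its sink, which takes at least $2^{d-1}-1$ steps by induction. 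It then crosses the connecting edge, and then traverses the top copy from its source-side image to $t_d$, which takes another $2^{d-1}-1$ steps. The total is $2^{d}-1$.
\end{enumerate}

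The main obstacle is step 2. When two copies are glued inside a bounded cube, the convex hull tends to create many new edges between vertices of the two copies, and these are the shortcuts that make monotone paths short. The gadget has to make the copies ``far apart'' combinatorially while keeping coordinates in $\{0,1,2,3\}$. Our first attempt would be to choose $G_0$ and $G_1$ as two lattice points whose difference $w$ is primitive but lies in no edge direction available to other vertex pairs. We would then use an auxiliary set of points lifting the old objective, so that any potential cross edge $[u\times G_0,\, v\times G_1]$ with $u \neq v$ or $u\neq t_{d-1}$ has its midpoint expressible as a convex combination of other vertices. Six coordinates should leave enough room for this. If this fails, we would instead build $P_d$ as a face-preserving product with a triangle-type gadget, and read the edges off the product structure.
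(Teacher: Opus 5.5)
Your gluing scheme fails structurally, and not only because the gadget is never actually constructed.

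With lattice points $G_0\neq G_1$, the polytope $P_d=\mathrm{conv}\bigl((P_{d-1}\times\{G_0\})\cup(\rho(P_{d-1})\times\{G_1\})\bigr)$ is a Cayley polytope. The functional $(\mathbf{x},\mathbf{g})\mapsto-(G_1-G_0)^{\intercal}\mathbf{g}$ is maximized exactly on the bottom copy, so the bottom copy is a proper face. Every vertex of a proper face has an edge of $P_d$ leaving that face, i.e.\ going into the top copy; these cross edges correspond to vertices of $P_{d-1}+\rho(P_{d-1})$. So (i) fails at every vertex, and no auxiliary points can remove these edges.

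Your own requirements also make all these cross edges increasing. The reversed top copy has its $c_d$-minimum at the image of $t_{d-1}$, and by (ii) the jump from $t_{d-1}$ to that image is increasing. Hence every top vertex has larger $c_d$-value than every bottom vertex, and $s_d$ has an increasing edge straight into the top copy. For a point reflection $\rho(\mathbf{x})=\mathbf{a}-\mathbf{x}$, the objective $-c_{d-1}$ even exhibits the edge from $s_{d-1}\times G_0$ to the top copy's source $\rho(t_{d-1})\times G_1$. The recursion then only gives $L(d)\geq L(d-1)+1$.

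This is exactly why your model cannot work. Klee--Minty and Goldfarb cubes have exponentially long paths for particular pivot rules, but their monotone diameter is at most $d$. In any LP orientation of a combinatorial cube, you can always take an improving step inside the smallest face containing the current vertex and the sink, which lowers the Hamming distance. The product fallback does not help either: on $A\times B$ every linear objective splits, so monotone distances add rather than double.

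The missing idea is a mechanism that removes all shortcut edges at once. The paper does this non-recursively.

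\begin{itemize}
\item It takes the $0/1$-polytope $\hat Q$, the doubled binary extended formulation of $\{(t,t^2)\}$, whose vertices $\hat{\mathbf v}(t)$ form an exponentially long shadow path.
\item It Minkowski-adds the lattice polytope $\hat R\subseteq[-1,1]^{6d}$, chosen so that objectives uniquely maximized at its vertex $\mathbf 0$ are strictly concave along $t\mapsto\hat{\mathbf v}(t)$.
\item By Lemma~\ref{lem:MinkSum}, a $-\hat{\mathbf z}$-increasing path from $\hat{\mathbf v}(2^d-1)$ stays among the $\hat{\mathbf v}(t)$ and uses only edges whose normals meet the interior of that cone.
\item Strict concavity forces such edges to join consecutive indices, so every such path has at least $2^d-1$ edges.
\end{itemize}

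The doubling $(\mathbf x,\mathbf x)$ is what keeps $\hat R$ in $[-1,1]^{6d}$, and hence $P+\mathbf 1$ in $[0,3]^{6d}$.
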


Our construction method has a further consequence. While Naddef's bound applies to polytopes with $0/1$-vertices, the argument breaks for unbounded polyhedra with $0/1$-vertices. These polyhedra appear in integer programming and combinatorial optimization as feasible regions for covering linear programs of ideal clutters. See \cite{PackingBook, AhmadThesis} for comprehensive surveys of the topic. However, to our knowledge, there are no nontrivial upper or lower bounds known on the diameters of unbounded polyhedra with $0/1$-vertices. By extending the argument from the monotone case, we show the following:

\begin{theorem}
\label{thm:main2}
For each $d \geq 2$, there exists an unbounded $0/1$-polyhedron in $\mathbb{R}^{6d}$ with combinatorial diameter at least $2^{d}-1$ and rays of its recession cone with generators in $\{-1,0,1\}^{6d}$.
\end{theorem}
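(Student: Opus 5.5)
We derive \cref{thm:main2} from the monotone construction behind \cref{thm:main} by a homogenization trick that turns a linear objective into unbounded directions. The principle is this. Take a polytope $P$ with a linear objective $c$ and a vertex $v$ whose monotone distance to the sink is large. Form the polyhedron $Q = P + \operatorname{cone}(R)$ for a finite set of rays $R$ with $c^\top r < 0$ for every $r \in R$. Then every vertex of $Q$ is a vertex of $P$. Every edge of $Q$ between two vertices is an edge of $P$ that lies on a face of $P$ whose normal cone meets the polar region cut out by $R$. If $R$ is chosen so that $\{y : y^\top r \le 0 \ \forall r\in R\}$ is a small cone around $c$, then the bounded edges of $Q$ are exactly the $c$-increasing-compatible part of the graph of $P$. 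Any path in $Q$ can then be "straightened" into a $c$-monotone path of no greater length, so graph distances in $Q$ dominate monotone distances in $P$. So the plan is first to recall, or reprove in the needed form, the explicit construction from \cref{thm:main}. We then replace its $[0,3]$ coordinates by a $0/1$ encoding, since the unbounded statement needs $0/1$-vertices. We also need rays in $\{-1,0,1\}^{6d}$.

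Concretely, I would proceed in three steps.
\begin{enumerate}
\item I would build, for each $d\ge 2$, a $0/1$-polytope $P_d\subseteq[0,1]^{6d}$ and an objective $c$. These should mimic the recursive doubling of \cref{thm:main}: the $6$ new coordinates per level act as a gadget forcing a path to traverse the lower-level structure twice. This gives $2^d-1$ steps in any path to the optimum that is allowed to use only edges on which $c$ increases.
\item Since $0/1$-polytopes have monotone diameter at most $d$ by Naddef, $P_d$ alone cannot work. The gain must come from the recession cone. So rather than $P_d+\operatorname{cone}(R)$ killing edges, I would take $Q_d=\operatorname{conv}(V)+\operatorname{cone}(R)$ with $R\subseteq\{-1,0,1\}^{6d}$ chosen as differences $e_i-e_j$ and $\pm e_i$. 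Adding these rays deletes the "shortcut" edges that Naddef's argument would use. Those are precisely the edges along which one coordinate is dominated in a direction of $R$.
\item I would verify that each vertex of the intended long path is still a vertex of $Q_d$ by exhibiting a supporting objective negative on all of $R$. I would then show that any edge of $Q_d$ changes a potential function $\phi$ by at most one. Here $\phi$ is a binary-counter value read off the gadget coordinates, with $\phi=0$ at one end and $2^d-1$ at the other.
\end{enumerate}

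The main obstacle is the last verification: characterizing all bounded edges of $Q_d$. One must rule out edges of $Q_d$ that jump the counter by more than one, including edges that were not edges of $\operatorname{conv}(V)$. A two-dimensional face of $\operatorname{conv}(V)$ containing a ray can expose a new segment. I would handle this via normal cones. A segment $[u,w]$ is an edge of $Q_d$ iff some $y$ with $y^\top r<0$ for all $r\in R$ is maximized over $V$ exactly on $\{u,w\}$. For the chosen $R$ this forces $y$ to have a sign/ordering pattern, essentially $y_i>y_j$ along a prescribed order and $y_i$ of fixed sign. I would then check by a case analysis, level by level in the recursion, that such $y$ can only isolate pairs of vertices whose counter values differ by one. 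This is the direct analogue of the monotonicity argument used for \cref{thm:main}.
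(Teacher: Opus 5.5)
There is a genuine gap. The proposal never produces a construction, and the one concrete choice it commits to provably fails.

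First, \cref{thm:main} is not built by a recursive doubling with six new coordinates per level. It uses the single polytope $\hat Q=\operatorname{conv}\{\hat{\mathbf v}(t)\}$, where $\mathbf v(t)$ records the binary digits of $t$ and $t^2$ and $\hat{\mathbf x}=(\mathbf x,\mathbf x)$. Step 1 is impossible as stated: by Naddef, every $0/1$-polytope in $\mathbb R^{6d}$ has a $c$-monotone path of length at most $6d$ from every vertex to the optimum. For the same reason, your ``straightening'' principle as stated only transfers monotone distances in $P$, so it cannot give more than $6d$.

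The decisive problem is Step 2. Suppose the recession cone is generated by vectors $e_i-e_j$ and $\pm e_i$. Then $N=\{y:y^\top r\le 0\ \forall r\in R\}$ is cut out by hyperplanes $y_i=y_j$ and $y_i=0$, and it is full-dimensional because the recession cone must be pointed. Hence $\operatorname{int}N$ contains an open chamber of this arrangement, i.e.\ a strict total order of $y_1,\dots,y_n,0$ with $n=6d$. It therefore contains an integer $c$ with entries in $[-n,n]$ and $\|c\|_1\le n(n+1)/2$. Every non-optimal vertex has a $c$-improving incident edge. That edge cannot be unbounded, since $c^\top r<0$ on all nonzero recession directions. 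Each step raises the integer $c^\top x$ by at least $1$, so every vertex reaches the $c$-maximal face within $\|c\|_1$ steps. That face is a face of $\operatorname{conv}(V)$, hence a $0/1$-polytope on which Naddef applies. So any polyhedron of your form has diameter at most $2\|c\|_1+n\le 36d^2+12d<2^d-1$ for $d\ge 13$.

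This is the paper's remark about nonnegative ray generators, slightly extended. A working cone needs a normal cone $N$ so thin that its interior contains no integer objective of small norm. That forces ray generators with many nonzero entries.

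The missing idea is the paper's choice of cone. Take $C$ to be the feasible cone of $\hat R$ at $\mathbf 0$, generated by $(\mathbf v(t+1)-\mathbf v(t),\mathbf v(t-1)-\mathbf v(t))$ and $(\mathbf v(t-1)-\mathbf v(t),\mathbf v(t+1)-\mathbf v(t))$. The doubling is exactly what puts these generators in $\{-1,0,1\}^{6d}$.
\begin{itemize}
\item Any $y\in\operatorname{int}N$ is negative on both vectors of each pair. Adding the two inequalities shows that $t\mapsto y^\top\hat{\mathbf v}(t)$ is strictly concave.
\item Hence over $\hat Q$ such a $y$ is maximized at a single $\hat{\mathbf v}(t)$ or at two consecutive ones.
\item The objectives $\alpha\hat{\mathbf y}-\beta\hat{\mathbf z}$ with $\alpha,\beta>0$ lie in $\operatorname{int}N$ and single out each $\hat{\mathbf v}(t)$.
\item By \cref{lem:BoundedFaces}, the bounded graph of $\hat Q+C$ is then exactly the path $\hat{\mathbf v}(0),\dots,\hat{\mathbf v}(2^d-1)$.
\end{itemize}
The same lemma removes your worry about new segments: bounded faces of $P+C$ are always faces $F+\mathbf 0$ of $P$, so no bounded edge outside the graph of $\operatorname{conv}(V)$ can appear. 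No level-by-level case analysis is needed beyond the concavity observation.
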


Much like how Todd's counterexample to the monotone Hirsch conjecture \cite{ToddExample} arises from the Klee-Walkup counterexample to the unbounded Hirsch conjecture \cite{KleeWalkup1967}, the mechanism underlying our monotone example arises from the example in Theorem \ref{thm:main2}. In particular, any family of examples for Theorem \ref{thm:main2} yields a family of examples for Theorem \ref{thm:main} at least for $d \geq 2$.

Note that such a construction is not possible with the ray generators in $\{0,1\}^{d}$ or even with purely non-negative ray generators. In that case, each ray has positive coordinate sum, so the face minimizing the sum of coordinates objective is a bounded face. Since the sum of coordinates objective can only take at most $d+1$ values on $0/1$-vertices, one reaches that face in at most $d$ steps and then may apply Naddef's bound yielding a combinatorial diameter bound of at most $3d$.

The core geometric idea underlying the construction for the monotone case is as follows. Start with a $0/1$-polytope $\hat{Q}$ that has an exponentially long shadow simplex path. That is, there exists a pair of objectives $\mathbf{c}$ and $\mathbf{w}$ such that the set of unique maximizers of any objective in $\{\lambda \mathbf{w} + (1-\lambda) \mathbf{c}: \lambda \in [0,1]\}$ is exponential. In geometric language, this is a line segment that intersects exponentially many full dimensional cones of the normal fan of the polytope on their interiors. Then build a lattice polytope $\hat{R}$ in $[-1,1]^{6d}$ with $\mathbf{0}$ as a vertex that has a very thin normal cone $N$ containing the segment. 

The Minkowski sum $P = \hat{Q} +\hat{R}$ is, up to integer translation, in $[0,3]^{6d}$, and we find that each vertex of $\hat{Q}$ in the shadow remains a vertex. However, their normal cones in $P$ are their original normal cones intersected with this normal cone $N$ of $\mathbf{0}$ in $\hat{R}$. We choose $N$ to be so thin that the only edges with normal cones that intersect its interior are between consecutive vertices of the shadow simplex path. Consider the directed graph starting at the vertex optimizing $\mathbf{w}$ for maximizing $\mathbf{c}$. Since $\mathbf{c}$ is contained in $N$, every facet normal of $N$ pointing away from $N$ must be decreasing with respect to $\mathbf{c}$. It follows that any monotone path starting at a vertex with normal cone contained in $N$ must consist of vertices with normal cones contained in $N$ and edges with normal cones intersecting the interior of $N$. We chose $N$ to be so thin that the only edges with normal cones that intersect the interior of $N$ are between consecutive vertices of the shadow path. Thus, starting at the $\mathbf{w}$-maximizer, the only $\mathbf{c}$-increasing path is the exponentially long shadow path. Therefore, the monotone diameter of $P$ is exponential. 

For the unbounded case, instead of taking the Minkowski sum with $\hat{R}$, take the Minkowski sum of $\hat{Q}$ with the feasible cone at $\mathbf{0}$ in $\hat{R}$. Then the graph of the resulting polyhedron is precisely the shadow simplex path which has exponential diameter. 

\section{The Construction}

Throughout we denote vectors by boldface lowercase letters and numbers in $\mathbb{R}$ by lowercase letters. For a vector $\mathbf{x}$, $x_{i}$ denotes its $i$th coordinate. Any vector $\mathbf{c} \in \mathbb{R}^{d}$ corresponds to a linear objective function $\varphi(\mathbf{x}) = \mathbf{c}^{\intercal} \mathbf{x}$. However, for simplicity, in place of referring to this function, we refer to the vector. In particular, we say a face of a polytope is $\mathbf{c}$-maximal if that function is maximized on that face. A face is uniquely $\mathbf{c}$-maximal if the set of $\mathbf{c}$-maximizers is that face. 

We rely on standard terminology regarding polytopes as found in \cite{LecturesonPolytopes} especially as pertains to normal fans and Minkowski sums. To start, we prove a technical lemma regarding Minkowski sums that is the key polyhedral insight driving the proof.

\begin{lem}
\label{lem:MinkSum}
Let $Q, R \subseteq \mathbb{R}^{d}$ be polytopes,  let $\mathbf{c} \in \mathbb{R}^{d}$ be a linear objective, and let $\mathbf{v}$ be a vertex of $R$. Suppose that $\mathbf{v} \in R$ is the unique $\mathbf{c}$-maximizer. Let $S \subseteq Q$ be the set of vertices of $Q$ that are simultaneously uniquely maximal with $\mathbf{v}$ for some linear objective. Then $S +\mathbf{v} = \{\mathbf{u} + \mathbf{v}: \mathbf{u} \in S\}$ is a subset of the vertices of $Q+R$, and any $\mathbf{c}$-increasing path starting at a vertex in $S + \mathbf{v}$ in $Q+R$ stays in $S + \mathbf{v}$ and only uses edges uniquely maximized by some objective uniquely maximized at $\mathbf{v}$. 
\end{lem}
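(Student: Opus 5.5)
The plan is to work entirely with normal fans. The basic fact about Minkowski sums is this: for any objective $\mathbf{y}$, the $\mathbf{y}$-maximal face of $Q+R$ is the sum of the $\mathbf{y}$-maximal faces of $Q$ and of $R$. Hence the normal fan of $Q+R$ is the common refinement of the normal fans of $Q$ and $R$. Write $N_R(\mathbf{v})$ for the (full-dimensional) normal cone of $\mathbf{v}$ in $R$; the hypothesis says $\mathbf{c}\in\operatorname{int} N_R(\mathbf{v})$.

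First, for $\mathbf{u}\in S$ choose $\mathbf{y}$ uniquely maximized at $\mathbf{u}$ in $Q$ and at $\mathbf{v}$ in $R$. Then the $\mathbf{y}$-maximal face of $Q+R$ is $\{\mathbf{u}\}+\{\mathbf{v}\}$, so $\mathbf{u}+\mathbf{v}$ is a vertex. Its normal cone is $N_Q(\mathbf{u})\cap N_R(\mathbf{v})$, and by definition of $S$ this cone meets $\operatorname{int}N_R(\mathbf{v})$.

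Next, take a $\mathbf{c}$-increasing edge $e$ of $Q+R$ leaving $\mathbf{u}+\mathbf{v}$. Its normal cone $N(e)$ is a facet of the full-dimensional cone $N_Q(\mathbf{u})\cap N_R(\mathbf{v})$, and $e$ is parallel to the outward normal $\mathbf{n}$ of that facet, oriented away from $\mathbf{u}+\mathbf{v}$. So $\mathbf{c}^\intercal \mathbf{n}>0$. The key claim is that $\operatorname{relint} N(e)$ lies in $\operatorname{int} N_R(\mathbf{v})$. Suppose instead that $N(e)$ lies on the boundary of $N_R(\mathbf{v})$. Then it lies in a facet hyperplane $H$ of $N_R(\mathbf{v})$, and $\mathbf{n}$ is (up to positive scaling) the outward normal of $N_R(\mathbf{v})$ along $H$. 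Since $\mathbf{c}$ is interior to $N_R(\mathbf{v})$, we get $\mathbf{c}^\intercal\mathbf{n}<0$, a contradiction. Equivalently, an edge coming from $R$'s side moves in the direction of an edge of $R$ at $\mathbf{v}$, and such a move decreases $\mathbf{c}$ because $\mathbf{v}$ is the unique $\mathbf{c}$-maximizer.

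Given the claim, pick $\mathbf{y}\in\operatorname{relint}N(e)$. Then $\mathbf{y}$ is uniquely maximized on $R$ at $\mathbf{v}$, and the $\mathbf{y}$-maximal face of $Q+R$ is $e$. This means $e=e_Q+\{\mathbf{v}\}$, where $e_Q$ is the $\mathbf{y}$-maximal face of $Q$, necessarily an edge $[\mathbf{u},\mathbf{u}']$. This already gives the edge statement of the lemma.

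It remains to show that the other endpoint $\mathbf{u}'+\mathbf{v}$ lies in $S+\mathbf{v}$. Perturb $\mathbf{y}$ slightly toward the $\mathbf{u}'$ side of $e_Q$. The perturbed objective is uniquely maximized at $\mathbf{u}'$ in $Q$, and, since being unique maximizer at $\mathbf{v}$ is an open condition, still at $\mathbf{v}$ in $R$. Hence $\mathbf{u}'\in S$. Induction along the path then finishes the proof.

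The main obstacle is the key claim, namely ruling out edges whose normal cone lies on $\partial N_R(\mathbf{v})$. The cleanest route is the edge-direction argument. An edge of $Q+R$ at $\mathbf{u}+\mathbf{v}$ equals $F_Q+F_R$ for some faces $F_Q\ni\mathbf{u}$ and $F_R\ni\mathbf{v}$, and these faces have parallel directions. If $F_R\neq\{\mathbf{v}\}$, then $F_R$ is an edge of $R$ at $\mathbf{v}$, and moving along it strictly decreases $\mathbf{c}$. So a $\mathbf{c}$-increasing edge forces $F_R=\{\mathbf{v}\}$. Its normal cone then meets $\operatorname{int}N_R(\mathbf{v})$, because the normal cone of $F_Q+\{\mathbf{v}\}$ is $N_Q(F_Q)\cap N_R(\mathbf{v})$, and this is not contained in a face of the fan of $R$ other than $N_R(\mathbf{v})$ itself.
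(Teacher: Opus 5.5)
Your proposal is correct and follows essentially the same route as the paper. Since the normal fan of $Q+R$ refines that of $R$, leaving $S+\mathbf{v}$ would require crossing the boundary of the normal cone of $\mathbf{v}$ in $R$, i.e., moving parallel to an edge of $R$ at $\mathbf{v}$, and such an edge is $\mathbf{c}$-decreasing. Where the paper appeals to the picture that the normal cones of $S+\mathbf{v}$ subdivide $N$, you argue locally and spell out two steps: the decomposition of an edge as $F_Q+F_R$ with $\mathbf{v}\in F_R$, and the perturbation showing the far endpoint lies in $S$.
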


\begin{proof}
Recall that the normal fan of $Q+R$ is the common refinement of the normal fans of $Q$ and $R$. In particular, the vertices of $Q +R$ are precisely the sums of vertices in $Q$ and $R$ that are simultaneously uniquely maximal for some linear objective. By hypothesis, this is the case for every vertex in $S + \mathbf{v}$, so they are vertices of $Q+R$.

Let $N$ denote the normal cone of $\mathbf{v}$ in $R$. The normal cone of each vertex $\mathbf{u} + \mathbf{v} \in S +\mathbf{v}$ is the normal cone of $\mathbf{u}$ in $Q$ intersected with the normal cone of $\mathbf{v}$ in $R$. In particular, the normal cone of $\mathbf{u} + \mathbf{v}$ is contained in $N$. Furthermore, by definition, $S+\mathbf{v}$ consists of all vertices of $Q+R$ with a normal vector contained in the interior of $N$, and so their normal cones subdivide $N$. Thus, for a path to leave $S +\mathbf{v}$, it would have to leave $N$. Since the normal fan of $Q+R$ is the common refinement of the two normal fans, such a path would have to leave along a supporting hyperplane of $N$. In the graph of the polytope, this would mean following an edge parallel and in the same orientation as one leaving $\mathbf{v}$ in $R$. Since $\mathbf{v}$ is the unique $\mathbf{c}$-maximizer on $R$, any such edge must be $\mathbf{c}$-decreasing. Hence, no $\mathbf{c}$-increasing path can take such an edge. Therefore, any $\mathbf{c}$-increasing path on $Q+R$ starting in $S +\mathbf{v}$ must stay in $S + \mathbf{v}$. By the same reasoning, the normal of any edge used must be a boundary between two cones in the subdivision of $N$ and therefore contains a linear objective in the interior of $N$ and thus uniquely maximized at $\mathbf{v}$.
\end{proof}

 All that remains is to construct our pair of polytopes. Let $Q = \text{conv}(\{(\mathbf{r}(t), \mathbf{s}(t)) \in \{0,1\}^{3d}: 0 \leq t \leq 2^{d}-1, t\in \mathbb{Z}\})$, where $\mathbf{r}(t)$ are the $d$ digits in the binary expansion of $t$ and $\mathbf{s}(t)$ are the $2d$ digits of the binary expansion of $t^{2}$. Throughout, we let $\mathbf{v}(t) = (\mathbf{r}(t), \mathbf{s}(t))$. This is the $0/1$ extended formulation for the convex hull of integer points on a parabola. To be more explicit, let $\mathbf{y} = (1,2,2^{2},\dots,2^{d-1},0,\dots,0)$ and $\mathbf{z} = (0,\dots,0,1,2,\dots,2^{2d-1})$. Then for all $ t \in [0,2^{d}-1] \cap \mathbb{Z}$, $\pi(\mathbf{v}(t)) = (t,t^{2})$, where
\[\pi(\mathbf{x}) = (\mathbf{y}^{\intercal}\mathbf{x}, \mathbf{z}^{\intercal}\mathbf{x})= \left(\sum_{i=1}^{d} x_{i}2^{i-1}, \sum_{j=d+1}^{3d} x_{j}2^{j-d-1}\right).\]
In particular, $\pi(Q) =\text{conv}( \{(t,t^{2}): t \in [0,2^{d}-1] \cap \mathbb{Z}\})$. For example, for $d=2$, the vertices are 
\[\mathbf{v}(0) = (0,0,0,0,0,0), \mathbf{v}(1) = (1,0,1,0,0,0), \mathbf{v}(2) = (0,1,0,0,1,0), \text{ and }
    \mathbf{v}(3) = (1,1,1,0,0,1).\]

Since $x^{2}$ is a strictly convex function, $\pi(\mathbf{v}(t)) = (t,t^{2})$ is a vertex of $\pi(Q)$ for all $t \in [0,2^{d}-1]\cap \mathbb{Z}$, so $\pi(Q)$ has $2^{d}$ many vertices. This is a standard example of a $0/1$-polytope with an exponentially large shadow as one may find in Section 3.1 of \cite{01Extremal}. 

We define $R$ by 
\[R = \text{conv}(\mathbf{0} \cup \{\mathbf{v}(t+1)-2\mathbf{v}(t) + \mathbf{v}(t-1): t \in [1,2^{d}-2] \cap \mathbb{Z}\}) \subseteq \mathbb{R}^{3d}.\]
We choose $R$ in this way to ensure an objective $\mathbf{c}$ is uniquely maximized at $\mathbf{0}$ over $R$ if and only if the sequence $\mathbf{c}^{\intercal} \mathbf{v}(0), \mathbf{c}^{\intercal} \mathbf{v}(1), \dots, \mathbf{c}^{\intercal} \mathbf{v}(2^{d}-1)$ is strictly concave. This is the key property to generalize from linear combinations of $\mathbf{y}$ and $\mathbf{z}$ . In particular, any positive combination of $\mathbf{y}$ and $-\mathbf{z}$ is in the interior of the normal cone of $\mathbf{0}$ in $R$.

Taking $Q+R$ yields an exponential monotone diameter example. However, $R$ is a lattice polytope in $[-2,2]^{3d}$, so $Q+R + 2\mathbf{1}$ is in $[0,5]^{3d}$. That is enough to deal with $k = 5$ and larger, but it is insufficient to reach the sharp threshold at $k= 3$. Thus, we need to take one additional step to correct this.

We double the coordinates of $Q$ by embedding it in twice the ambient dimension. Define $\hat{Q} = \{(\mathbf{x}, \mathbf{x}): \mathbf{x} \in Q\} \subseteq [0,1]^{6d}$. Note that we can rewrite the expression used for defining the vertices of $R$ that forces strict concavity as
\[\mathbf{v}(t+1)-2\mathbf{v}(t) + \mathbf{v}(t-1) = (\mathbf{v}(t+1) - \mathbf{v}(t)) + (\mathbf{v}(t-1) - \mathbf{v}(t)).\] 
The motivation behind the doubling procedure is that $\mathbf{v}(t-1) -\mathbf{v}(t)$ and $\mathbf{v}(t+1) - \mathbf{v}(t)$ are both differences of two $\{0,1\}$ vectors and thus have coordinates in $\{-1,0,1\}$. In particular, we define
\begin{align*}
    \hat{R} = \text{conv}(\mathbf{0} &\cup \{(\mathbf{v}(t+1) -\mathbf{v}(t), \mathbf{v}(t-1) -\mathbf{v}(t)): t \in [1,2^{d}-2] \cap \mathbb{Z}\} \\ 
    &\cup \{(\mathbf{v}(t-1) -\mathbf{v}(t), \mathbf{v}(t+1) -\mathbf{v}(t)): t \in [1,2^{d}-2] \cap \mathbb{Z}\}).  \end{align*}
Then when $\mathbf{0}$ is uniquely maximal for a given objective over $\hat{R}$, the sequence of objective values on $(\mathbf{v}(s), \mathbf{v}(s))$ is still strictly concave, but additionally $\hat{R} \subseteq [ -1,1]^{6d}$. We use this insight to prove our main theorem. Notationally, we will frequently need to double the coordinates of our vectors, and we denote this by $\hat{\mathbf{x}} = (\mathbf{x},\mathbf{x})$.

\begin{proof}[Proof of Theorem \ref{thm:main}]
Consider $P=\hat{Q} + \hat{R}$. By definition $\hat{Q}$ is a $0/1$-polytope. Furthermore, each vertex in $\hat{R}$ is either $\mathbf{0}$ or a difference of $0/1$ vectors, so $\hat{R}$ is a lattice polytope contained in $[-1,1]^{6d}$. Therefore, since $\hat{Q}$ is a $0/1$-polytope, $P+\mathbf{1} \subseteq [0,3]^{6d}$ and is a lattice polytope.  

For each $\alpha, \beta > 0$, let $\mathbf{c} = \alpha\mathbf{y} -\beta \mathbf{z}$, where we recall that 
$\mathbf{y} = (1,2,\dots,2^{d-1},0,\dots,0) \in \mathbb{R}^{3d}$ and \\$\mathbf{z} = (0,\dots,0, 1,2,\dots,2^{2d-1}) \in \mathbb{R}^{3d}$. 

Then for any $t \in [0,2^{d}-1] \cap \mathbb{Z}$, by construction of $\hat{Q}$,
\[\mathbf{c}^{\intercal}\mathbf{v}(t) = \alpha t-\beta t^{2} \text{ and } \hat{\mathbf{c}}^{\intercal} \hat{\mathbf{v}}(t) = 2(\alpha t -\beta t^{2}).\]
This function is strictly concave as a function of $t$ in both cases, so 
\begin{align*}          (\mathbf{c},\mathbf{c})^{\intercal}(\mathbf{v}(t+1)-\mathbf{v}(t), \mathbf{v}(t-1)-\mathbf{v}(t)) &= (\mathbf{c},\mathbf{c})^{\intercal}(\mathbf{v}(t-1)-\mathbf{v}(t), \mathbf{v}(t+1)-\mathbf{v}(t))\\
&= \mathbf{c}^{\intercal}(\mathbf{v}(t+1)-2\mathbf{v}(t)+\mathbf{v}(t-1)) \\
&<0.
\end{align*}
Therefore, $\hat{\mathbf{c}}$ is uniquely maximized at $\mathbf{0}$ in $\hat{R}$. In particular, $\mathbf{0}$ is a vertex of $\hat{R}$. 

Furthermore, by adjusting the choice of $\alpha, \beta > 0$, we may ensure that, for any fixed choice of $t$, $\hat{\mathbf{v}}(t)$ is a $\hat{\mathbf{c}}$-maximizer via an elementary calculus argument. 

If $t = 0$, take any $0 < \alpha < \beta$. Then $\alpha s- \beta s^{2} < 0$ for all $s \geq 1$ meaning that $0$ is  maximal.  Otherwise, $t > 0$, so there exist positive $\alpha$ and $\beta$ such that $\frac{\alpha}{2\beta} = t$. It follows then that $\alpha -2\beta t = 0$. Since $2(\alpha t- \beta t^{2})$ is strictly concave, $t$ is then a global maximizer. In particular, this implies that for all $t \in [0,2^{d}-1] \cap \mathbb{Z}$, $\hat{\mathbf{v}}(t)$ and $\mathbf{0}$ are simultaneously uniquely maximal for the same objective. Thus, $\hat{\mathbf{v}}(t) + \mathbf{0} =\hat{\mathbf{v}}(t)$ is a vertex of $P$ with normal cone contained in the normal cone of $\mathbf{0}$ in $\hat{R}$ for all $t \in [0,2^{d}-1] \cap \mathbb{Z}$.

Let $(\mathbf{a}, \mathbf{b}) \in \mathbb{R}^{3d} \times \mathbb{R}^{3d}$, and suppose that $(\mathbf{a},\mathbf{b})$ is maximized uniquely at $\mathbf{0}$ over $\hat{R}$. Then, by definition of $\hat{R}$, for any $t \in [1,2^{d}-2] \cap \mathbb{Z}$, 
\begin{align*}
    (\mathbf{a}, \mathbf{b})^{\intercal} (\mathbf{v}(t+1) -\mathbf{v}(t), \mathbf{v}(t-1) -\mathbf{v}(t)) &< 0 \\
    (\mathbf{a}, \mathbf{b})^{\intercal} (\mathbf{v}(t-1) -\mathbf{v}(t), \mathbf{v}(t+1) -\mathbf{v}(t)) &<0,
\end{align*}
so by adding these strict inequalities,
\[(\mathbf{a}, \mathbf{b})^{\intercal}(\mathbf{v}(t+1) -2\mathbf{v}(t)+\mathbf{v}(t-1), \mathbf{v}(t+1) -2\mathbf{v}(t)+\mathbf{v}(t-1)) < 0.\]
Thus, $(\mathbf{a},\mathbf{b})^{\intercal} \hat{\mathbf{v}}(t)$ is also a strictly concave function of $t$.

The inequality $(\mathbf{a},\mathbf{b})^{\intercal}(\hat{\mathbf{v}}(t+1)- 2\hat{\mathbf{v}}(t) + \hat{\mathbf{v}}(t-1)) < 0$ is equivalent to $(\mathbf{a},\mathbf{b})^{\intercal}\hat{\mathbf{v}}(t+1) - (\mathbf{a},\mathbf{b})^{\intercal}\hat{\mathbf{v}}(t) < (\mathbf{a},\mathbf{b})^{\intercal} \hat{\mathbf{v}}(t) - (\mathbf{a},\mathbf{b})^{\intercal}\hat{\mathbf{v}}(t-1)$. That is consecutive differences are strictly decreasing. This implies there is either a unique maximum in the sequence or any pair of maxima must be consecutive. It follows then that if $(\mathbf{a}, \mathbf{b})$ is maximized uniquely at $\mathbf{0}$ in $\hat{R}$ and defines an edge between vertices $\hat{\mathbf{v}}(s)$ and $\hat{\mathbf{v}}(t)$, then $|s-t| = 1$.  Hence, since the normal cones of $\hat{\mathbf{v}}(s)$ and $\hat{\mathbf{v}}(t)$ are contained in the normal cone of $\mathbf{0}$ in $\hat{R}$ in $P$, there is only an edge between $\hat{\mathbf{v}}(s)$ and $\hat{\mathbf{v}}(t)$ in $P$ if $|s-t| = 1$.

To lower bound the monotone diameter, consider the graph of $P$ oriented by the objective $-\hat{\mathbf{z}}$ starting at the vertex $\hat{\mathbf{v}}(2^{d}-1)$ of $P$. Note that $-\hat{\mathbf{z}}^{\intercal}\hat{\mathbf{v}}(\mathbf{t}) = -2t^{2}$, which is strictly concave, and it is thus uniquely maximized at $\mathbf{0}$ in $\hat{R}$ as well. In $\hat{Q}$, the largest value of $-t^{2}$ is $0$, which is attained at $\mathbf{0}=\hat{\mathbf{v}}(0)$. Thus, the monotone path must start at $\hat{\mathbf{v}}(2^{d}-1)$ and end at $\hat{\mathbf{v}}(0)$. 

Since $-\hat{\mathbf{z}}$ is maximized uniquely at $\mathbf{0}$ in $\hat{R}$, by Lemma \ref{lem:MinkSum} any $-\hat{\mathbf{z}}$ monotone path starting at $\hat{\mathbf{v}}(2^{d}-1)$ must only use vertices of the form $\hat{\mathbf{v}}(t)$ for $t \in [0,2^{d}-1] \cap \mathbf{Z}$ and edges optimized by objectives maximized uniquely at $\mathbf{0}$ in $\hat{R}$. Thus, any edge must be between pairs of vertices of the form $\hat{\mathbf{v}}(t)$ with consecutive indices. Hence, any  such monotone path starts at $\hat{\mathbf{v}}(2^{d}-1)$, ends at $\hat{\mathbf{v}}(0)$, and at each step only changes the index by $1$. Therefore, such a monotone path is of length at least $2^{d}-1$ implying the desired lower bound. 
\end{proof}

In fact, while not necessary to the conclusion, the path used is unique and corresponds to the shadow simplex path as described in our geometric outline.

\subsection{The Unbounded Case}

The unbounded construction follows as a consequence of the previous result. The following lemma is a standard polyhedral geometry fact. 

\begin{lem}
\label{lem:BoundedFaces}
Let $P \subseteq \mathbb{R}^{d}$ be a polytope, and let $C$ be a pointed cone with vertex $\mathbf{0}$. Then bounded faces of $P+ C$ are precisely the bounded faces of $P$ with normal cones that intersect the interior of $N$, the normal cone of $\mathbf{0}$ in $C$. Furthermore, for any such face $F$, its normal cone is its normal cone in $P$ intersected with $N$.
\end{lem}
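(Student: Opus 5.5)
The plan is to reduce everything to support functions and maximizer faces of Minkowski sums. Write $N$ for the normal cone of $\mathbf{0}$ in $C$. Since $\mathbf{0}$ is the vertex of $C$, we have $N=\{\mathbf{c}: \mathbf{c}^{\intercal}\mathbf{x}\le 0 \text{ for all } \mathbf{x}\in C\}$, the polar cone $C^{\circ}$. Because $C$ is pointed, $N$ is full-dimensional, and its interior is exactly the set of $\mathbf{c}$ with $\mathbf{c}^{\intercal}\mathbf{x}<0$ for every nonzero $\mathbf{x}\in C$. The basic fact I will use is that for any objective $\mathbf{c}$, the set of $\mathbf{c}$-maximizers of $P+C$ is nonempty exactly when $\mathbf{c}\in N$, and in that case
\[\operatorname{face}_{\mathbf{c}}(P+C)=\operatorname{face}_{\mathbf{c}}(P)+\operatorname{face}_{\mathbf{c}}(C).\]
This holds because a sum is maximized exactly when each summand is maximized. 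Moreover, $\operatorname{face}_{\mathbf{c}}(C)=\{\mathbf{0}\}$ precisely when $\mathbf{c}\in\operatorname{int}N$. For $\mathbf{c}\in\partial N$ it is a nonzero face of $C$, which is unbounded since $C$ is a cone.

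The first step is to identify the bounded faces of $P+C$. Every face of $P+C$ has the form $\operatorname{face}_{\mathbf{c}}(P+C)$ for some $\mathbf{c}\in N$. Such a face is bounded if and only if $\operatorname{face}_{\mathbf{c}}(C)$ is bounded, which happens if and only if $\operatorname{face}_{\mathbf{c}}(C)=\{\mathbf{0}\}$, that is, $\mathbf{c}\in\operatorname{int}N$. In that case the face equals $\operatorname{face}_{\mathbf{c}}(P)$. So the bounded faces of $P+C$ are exactly the faces $F$ of $P$ that arise as $\operatorname{face}_{\mathbf{c}}(P)$ for some $\mathbf{c}\in\operatorname{int}N$. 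Equivalently, they are the faces of $P$ whose normal cone meets $\operatorname{int}N$ in a point of the relative interior of that normal cone. (Faces of $P$ are automatically bounded.)

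The second step computes the normal cone. For a bounded face $F$ of $P+C$, its normal cone is $\{\mathbf{c}: F\subseteq\operatorname{face}_{\mathbf{c}}(P+C)\}$. I would check that $F\subseteq \operatorname{face}_{\mathbf{c}}(P)+\operatorname{face}_{\mathbf{c}}(C)$ holds if and only if $\mathbf{c}\in N$ and $F\subseteq\operatorname{face}_{\mathbf{c}}(P)$. The backward direction is immediate because $\mathbf{0}\in\operatorname{face}_{\mathbf{c}}(C)$. For the forward direction, note that $F\subseteq P$. If $\mathbf{c}\in N$, the support function satisfies $h_{P+C}(\mathbf{c})=h_P(\mathbf{c})$, since the maximum of $\mathbf{c}$ over $C$ is $0$. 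Hence a point of $P$ attaining $h_{P+C}(\mathbf{c})$ attains $h_P(\mathbf{c})$. This gives that the normal cone of $F$ in $P+C$ is $N_P(F)\cap N$.

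The main obstacle is reconciling the phrase ``normal cones that intersect the interior of $N$'' with the precise condition from the first step, namely a relative-interior point of $N_P(F)$ lying in $\operatorname{int}N$. These agree once one uses the second step. Suppose $N_P(F)$ meets $\operatorname{int}N$. Take $\mathbf{c}$ in $\operatorname{relint}N_P(F)$ and $\mathbf{c}'\in N_P(F)\cap\operatorname{int}N$. Then points of the open segment from $\mathbf{c}'$ toward $\mathbf{c}$ that are close to $\mathbf{c}'$ lie in $\operatorname{relint}N_P(F)\cap\operatorname{int}N$, because relative interiors absorb open segments and $\operatorname{int}N$ is open. This closes the equivalence and completes the proof.
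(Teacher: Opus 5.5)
Your proof is correct and follows essentially the same route as the paper: the $\mathbf{c}$-maximal face of $P+C$ is the sum of the $\mathbf{c}$-maximal faces of $P$ and $C$. It is bounded exactly when $\mathbf{c}\in\operatorname{int}N$, where the $C$-part is $\{\mathbf{0}\}$, and in that case it coincides with the face of $P$. You are more careful than the paper in two places it leaves loose: the explicit verification, via $h_{P+C}=h_P$ on $N$, that the normal cone is $N_P(F)\cap N$, and the line-segment argument showing that meeting $\operatorname{int}N$ anywhere in $N_P(F)$ is equivalent to meeting it at a relative-interior point.
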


\begin{proof}
If $\mathbf{c}$ is increasing on any ray of $C$, then there is either no $\mathbf{c}$-maximal face on $P+C$ or the resulting face is not bounded, because one could always weakly increase the objective by adding a multiple of that ray to a maximizer. Thus, if a face of $P+C$ is bounded, it must have an objective in its normal cone that is strictly decreasing along any ray of $C$. This is equivalent to being in the interior of $N$. Hence, the normal cone of any bounded face of $P+C$ must have relative interior contained in the interior of $N$. If an objective uniquely maximized at that face $F$ in $P$ is contained in the interior of $N$, it is optimized at $\mathbf{0}$ in $C$, and so the resulting maximal face in $P+C$ is $F + \mathbf{0} = F$. Thus, any vector in the intersection of the normal cone of $F$ and $N$ is contained in $N$. Therefore, the normal cones of the bounded faces of $P+C$ are precisely the normal cones of the faces of $P+C$ that intersect $N$ on its interior and are given by the intersection with $N$.
\end{proof}

This geometric observation allows us to take the same monotone idea and apply it to the unbounded setting.

\begin{proof}[Proof of Theorem \ref{thm:main2}]
Since $d \geq 2$, the interval $[1,2^{d}-2]$ is nonempty. Thus, $\hat{R}$ has a vertex other than $\mathbf{0}$. Take $\hat{Q} + C$, where $C$ is the feasible cone of $\mathbf{0}$ in $\hat{R}$. Since $\hat{R}$ is a lattice polytope in $[-1,1]^{6d}$, the rays of that feasible cone have generators in $\{-1,0,1\}^{6d}$. The normal cone of $\mathbf{0}$ in $C$ is the normal cone of $\mathbf{0}$ in $\hat{R}$, since it has the same feasible cone. Then by Lemma \ref{lem:BoundedFaces}, the bounded faces of $\hat{Q} + C$ are precisely the faces of $\hat{Q}$ with normal cones that intersect the interior of the normal cone of $\mathbf{0}$ in $\hat{R}$. By the proof of Theorem \ref{thm:main}, the vertices are then each of the form $\hat{\mathbf{v}}(t)$ for all $t \in [0,2^{d}-1] \cap \mathbb{Z}$ and the edges are exactly those between vertices with consecutive indices. Thus, the resulting graph is a path of length $2^{d}-1$, which has diameter $2^{d}-1$ and yields the result.
\end{proof}


\section*{AI Disclosure}
The proof of Theorem \ref{thm:main} arose from discussion with ChatGPT 5.6 Sol Pro and ChatGPT 6 Pro. The final proof here is our own, and we take full responsibility for correctness. AI was only used for checking for typos otherwise.

\section*{Acknowledgments}
I have benefited from discussion with many researchers regarding this problem especially Stefan Weltge, Laura Sanit\`{a}, Jes\'{u}s De Loera, and Sean Kafer.

\bibliographystyle{amsplain}
\bibliography{bibliography.bib}

\end{document}